\newtheorem{Prop}{Proposition}
\newtheorem{Thm}{Theorem}
\newcommand{\pierwszapochodna}[2]{\frac{\partial #1}{\partial #2}}
\newcommand{\drugapochodna}[3]{\frac{\partial^2 #1}{\partial #2 \partial \bar{#3}}}
\newcommand{\poczasie}[2]{\frac{d^{#2} #1}{dt^{#2}}}
\newcommand{\Monge}[0]{Monge-Amp\`{e}re }
\newcommand{\Kahler}[0]{K\"{a}hler }
\begin{document}

\title{The geometry of $\mathbb{C}^2$ equipped with Warren's metric}
\author{Szymon Myga}
\address{Insitute of Mathematics\\ Jagiellonian University\\ 30-348 Kraków, Poland}
\email{szymon.myga@im.uj.edu.pl}

\begin{abstract}
The aim of this note is to describe the geometry of $\mathbb{C}^2$ equipped with a \Kahler metric defined by Warren. It is shown that with that metric $\mathbb{C}^2$ is a flat manifold. Explicit formulae for geodesics and volume of geodesic ball are also computed. Finally, a family of similar flat metrics is constructed.
\end{abstract}

\subjclass[2010]{32Q15}

\keywords{Donaldson's equation, geodesics, explicit examples}

\maketitle

\section{Introduction}

In his note~\cite{Warren} Warren showed an interesting construction of an entire solution to the unimodular complex \Monge equation in $\mathbb{C}^2$ that depends only on three real parameters. This solution is a special case of a non-convex solution to Donaldson's equation, i.e.
\[
 f_{tt}\,\Delta_x f - |\nabla_x f_t|^2 = 1,\qquad (t,x) \in \mathbb{R}\times\mathbb{R}^{n-1}.
\]
As showed by Donaldson in~\cite{Donaldson}, for $n=3$ a multiple of a solution to the above problem must also satisfy the unimodular complex Monge-Amp\`{e}re equation in complex dimension 2 with one `artificial' imaginary parameter. The equation comes from the study of the space of volume forms on compact Riemannian manifolds. It also has connections with free boundary problems and Nahm's equations from mathematical physics (see~\cite{Donaldson} for details). 

Some properties of solutions to this problem were studied by He in~\cite{He}, where a class of `trivial' solutions were exhibited, that is solutions of the form
\[
 f(t,x) = at^2 + tb(x) + g(x), \qquad (t,x) \in \mathbb{R}\times\mathbb{R}^{n-1}
\]
with $a = const.$, $\Delta b = 0$ and $\Delta g = (1 + |\nabla b|^2)/2a$.

Let $(z,w) = (x + iy, u + iv) \in \mathbb{C}^2$. Then Warren's solution is

\[
 f(z,w) = 4|z|^2e^{\text{Re}(w)} + e^{-\text{Re}(w)}.
\]

This plurisubharmonic function gives the \Kahler metric $g$ such that $\text{det}(g) = 1$. Explicitly
\[
g =
 \begin{pmatrix}
  \drugapochodna{f}{z}{z} & \drugapochodna{f}{z}{w} \\[0.5em]
  \drugapochodna{f}{w}{z} & \drugapochodna{f}{w}{w} 
 \end{pmatrix}
 =
 \begin{pmatrix}
  4e^u & 2\bar{z}e^u \\[0.5em]
  2ze^u & |z|^2e^u + \frac{1}{4}e^{-u} 
 \end{pmatrix},
\]

with inverse

\[
  \begin{pmatrix}
  \frac{1}{4}e^{-u} + |z|^2e^u & -2\bar{z}e^u \\[0.5em]
  -2ze^u & 4e^u 
 \end{pmatrix}.
\]

The construction of this metric was based on a method for constructing non-polynomial solution to the $k$-Hessian equations given by Warren in~\cite{Warren2}.

In the real case, it was proved by J\"{o}rgens, Calabi and Pogorelov (\cite{Jorgens},\cite{Calabi},\cite{Pogorelov}) that for $n\geq 2$, the only convex solutions to the unimodular \Monge equation on the whole $\mathbb{R}^n$ are quadratic functions. It is well known that similar property for entire plurisubharmonic solutions to the complex \Monge equation does not hold in $\mathbb{C}^n$. However, a question posed by Calabi in~\cite{Calabi1} whether a \Kahler metric given by the complex Hessian of such a solution is flat is still open. 

In this note the geometry of $(\mathbb{C}^2,g)$ is studied. First, we prove the following:
\begin{Thm}
 Warren's metric $g$ is flat.
\end{Thm}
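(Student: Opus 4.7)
The plan is to exhibit a local change of holomorphic coordinates in which the potential $f$ becomes the standard Euclidean \Kahler potential $|\zeta|^2 + |\eta|^2$. Since flatness is a local property and biholomorphic changes of coordinates transport a \Kahler potential together with its associated metric, producing such coordinates immediately yields the flatness of $(\mathbb{C}^2, g)$.

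The choice of coordinates is motivated by inspection of the two summands of $f$. Because $|e^{w/2}|^2 = e^{\text{Re}(w)}$, the first term $4|z|^2 e^{\text{Re}(w)}$ is the squared modulus of the holomorphic function $\zeta := 2z e^{w/2}$, and the second term $e^{-\text{Re}(w)}$ is the squared modulus of $\eta := e^{-w/2}$. A short check then gives
\[
 f(z,w) = |\zeta(z,w)|^2 + |\eta(z,w)|^2,
\]
so that $f = \Phi^{*}\bigl(|\zeta|^2 + |\eta|^2\bigr)$ for the holomorphic map $\Phi(z,w) := (\zeta, \eta)$.

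Next I would compute the holomorphic Jacobian of $\Phi$; a direct calculation produces the constant value $-1$, so that $\Phi$ is a local biholomorphism everywhere, mapping $\mathbb{C}^2$ onto $\mathbb{C} \times \mathbb{C}^{*}$. Using the naturality of $\partial\bar{\partial}$ under holomorphic maps, we then have
\[
 g = i\,\partial\bar{\partial} f = \Phi^{*}\bigl(i\,\partial\bar{\partial}(|\zeta|^2 + |\eta|^2)\bigr),
\]
i.e.\ $g$ is locally the pullback of the standard flat \Kahler metric on $\mathbb{C}^2$ under a local biholomorphism. Its curvature tensor must therefore vanish identically.

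I expect no serious technical obstacle along this route. The only non-mechanical step is guessing the change of variable, but the exponential structure of $f$ makes $\zeta$ and $\eta$ essentially the only natural candidates (up to unimodular constants and the branch choice of the square root of $e^{w}$). An alternative approach would be to compute the components $R_{i\bar{j}k\bar{l}}$ directly from the explicit formulas for $g$ and $g^{-1}$ given above; this works but is considerably more laborious and offers less geometric insight than the coordinate change.
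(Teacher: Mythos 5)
Your proof is correct, and it takes a genuinely different route from the paper. The paper proceeds by brute force: it computes all the Christoffel symbols of $g$ from the \Kahler formula $\Gamma^{\alpha}_{\beta\gamma} = \partial_{\beta}g_{\gamma\bar{\nu}}\,g^{\bar{\nu}\alpha}$, observes that every one of them is holomorphic (indeed $0$, $\tfrac12$, $\tfrac{z}{2}$ or $-\tfrac12$), and concludes that the curvature $R^{\delta}_{\alpha\bar{\beta}\gamma} = -\partial_{\bar{z}_{\beta}}\Gamma^{\delta}_{\alpha\gamma}$ vanishes. You instead exhibit the explicit holomorphic map $\Phi(z,w) = (2ze^{w/2},\, e^{-w/2})$, verify that $f = |\zeta|^2 + |\eta|^2$ and that the holomorphic Jacobian is the constant $-1$, and invoke naturality of $\partial\bar{\partial}$; all three computations check out, and a local biholomorphism pulling back the Euclidean metric certainly preserves flatness, so the argument is complete. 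Each approach buys something: the paper's computation is needed anyway, since the Christoffel symbols drive the geodesic analysis of Sections 3--5; your construction, on the other hand, is more structural and quietly explains most of the rest of the paper --- $\Phi$ realizes $(\mathbb{C}^2,g)$ as a holomorphic isometric covering of $\mathbb{C}\times\mathbb{C}^{*}$ with the flat metric, so the geodesics are preimages of straight lines, incompleteness corresponds to lines running into the deleted hyperplane $\{\eta=0\}$ in finite time, the unimodularity $\det g = |{\det J_{\Phi}}|^2 = 1$ is immediate, and the volume count for geodesic balls follows at once. It is worth noting that your coordinates do not settle Calabi's question in general (they are tailored to this potential), but as a proof of the stated theorem your route is both valid and more illuminating; it would also be natural to check it against the paper's Proposition in Section 5, where the analogous factorization $f = |z\,e^{h/2}|^2$-type splitting only works when $h$ is harmonic, matching the hypothesis there.
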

So the Warren's metric is not a counterexample to Calabi's problem as was initially hoped. The proof is done by direct calculation of the curvature tensor in Section 2. In the following sections the geodesic equations are solved, allowing one to describe the geometry of geodesic balls. Finally, a family of similar flat metrics on $\mathbb{C}^2$ is constructed by slight generalization of Warren's argument.

\section{Curvature}

For any \Kahler metric $g$ the formulae for Christoffel symbols simplify (see~\cite{Ballmann}) to the
\[
 \Gamma^{\alpha}_{\beta \gamma} = \pierwszapochodna{g_{\gamma\bar{\nu}}}{z_{\beta}} g^{\bar{\nu}\alpha}, \;\;\; \forall \; \alpha, \beta, \gamma \in \{ 1,\dots, n \},
\]
where $g_{\gamma\bar{\nu}}$ are the coefficients of the metric and $g^{\bar{\nu}\alpha}$ are coefficients of the inverse. This makes the computation of Christoffel symbols for the metric $g$ straightforward:
\begin{align*}
 \Gamma^z_{zz} &= 0, \\
 \Gamma^z_{zw} &= \Gamma^z_{wz} = 2e^u\left(\frac{1}{4}e^{-u} +|z|^2e^u\right) + \bar{z}e^u(-2ze^u) = \frac{1}{2}, \\
 \Gamma^z_{ww} &= ze^u\left(\frac{e^{-u}}{4} + |z|^2e^u\right) -2ze^u\left(\frac{|z|^2e^u}{2} - \frac{e^{-u}}{8}\right) =\frac{z}{2}, \\
 \Gamma^w_{zz} &= 0, \\
 \Gamma^w_{zw} &= \Gamma^w_{wz} = (-2\bar{z}e^u)2e^u + 4e^u\bar{z}e^u = 0,\\
 \Gamma^w_{ww} &= (-2\bar{z}e^u)ze^t + 4e^u\left(\frac{|z|^2e^u}{2}-\frac{e^{-u}}{8}\right) = -\frac{1}{2}.
\end{align*}

The curvature tensor coefficients again simplify. For \Kahler metrics, the only non-zero coefficients can be
\[
 R^{\delta}_{\alpha\bar{\beta}\gamma} = -\pierwszapochodna{\Gamma^{\delta}_{\alpha\gamma}}{\bar{z}_{\beta}},\quad
 R^{\delta}_{\bar{\alpha}\beta\gamma} = \pierwszapochodna{\Gamma^{\delta}_{\beta\gamma}}{\bar{z}_{\alpha}}, \quad \forall \; \alpha, \beta, \gamma \in \{1,\ldots,n\}
\]
and their conjugates. So, clearly the curvature of $g$ vanishes.

\section{Geodesics and Incompleteness}

With the Christoffel symbols computed, the geodesic equations in complex coordinates are

\begin{equation*}
\poczasie{z}{2} + \poczasie{z}{ }\poczasie{w}{ } + \frac{z}{2}\left(\poczasie{w}{}\right)^2 = 0
\end{equation*}
\begin{equation}\label{eq:drugie-geodezyjne}
\poczasie{w}{2} - \frac{1}{2}\left(\poczasie{w}{}\right)^2 = 0. 
\end{equation}

The order of the first equation can be reduced by plugging the second equation into it, thus producing
\[
 \poczasie{z}{} + z\poczasie{w}{} = C,
\]
where $C$ is a constant.
Written in real coordinates equation (\ref{eq:drugie-geodezyjne}) is
\begin{align*}
 u'' &= \frac{(u')^2 - (v')^2}{2}, \\
 v'' &= u'v'.
\end{align*}

It can be solved given an initial velocity vector $(U,V)$ at a starting point $(u_0,v_0)$. The second equation is a derivative of the logarithm, so it reduces to

\[
 v' = Ve^{u-u_0}.
\]
Now, after putting this into the first equation and making a substitution $\alpha(u) = (u')^2$ one gets

\[
  u' =  \pm\sqrt{(U^2 + V^2)e^{u-u_0} - V^2e^{2u - 2u_0}}, \\
\]
where the sign depends on the sign of the initial velocity component $U$ and time. Since this equation is autonomous it integrates to
\[
 u(t) = u_0 - \log\left(\frac{U^2 + V^2}{4}t^2 -Ut + 1\right),
\]
but it might not be defined for every $t > 0$. If $U \leq 0$, then the formula holds for every $t > 0$. For $U >0, V\neq0$ the formula also holds but this time the velocity $u'$ switches sign at time $t_0 = 2U/(U^2 + V^2)$ since then $u'$ vanishes and $u''$ is negative. For $U>0$ and $V=0$ the curve $u(t)$ reaches infinity at $t_0$, independently of the initial point. This shows incompleteness of the metric $g$, already established by Warren in~\cite{Warren}.

Equipped with an explicit fomula for $u$, one integrates $v'$ to
\[
 v(t) = 2\arctan\left(\frac{U}{V}\right)\; -2\arctan\left(\frac{U - \frac{U^2 + V^2}{2}t}{V}\,\right) + v_0,
\]
provided $V\neq 0$, otherwise $v(t) = v_0$.
\vspace{1em}

Now, the equation for the first component of the geodesic is linear
\[
 \poczasie{z}{} + z\poczasie{w}{} = C
\]
for some constant $C$ depending on the initial velocity. The solution then is
\[
 z(t) = e^{-w}\left[(Z + z_0W)\int_0^t e^{w} \;+ z_0e^{w_0}\right],
\]
Where $Z = X + iY, W = U + iV$ are the initial velocities and $z_0, w_0$ are the starting points. Denote by $Q(t) = \frac{|W|^2}{4}t^2 - Ut + 1 = e^{u_0-u}$ and $\vartheta(t) = v(t) - v_0$, then
\begin{align*}
 z(t) =& (Z + z_0W)[Q(t)\cos\vartheta(t) - iQ(t)\sin\vartheta(t)]\int_0^t \frac{\cos\vartheta(s) + i\sin\vartheta(s)}{Q(s)}ds \\
 & + z_0[Q(t)\cos\vartheta(t) - iQ(t)\sin\vartheta(t)].
\end{align*}
Then formulae for the sine and the cosine of the arctangent yield
\begin{align*}
 \cos\vartheta(s) &= Q^{-1}(s)\left(\frac{U^2 - V^2}{4}s^2 -Us + 1\right), \\
 \sin\vartheta(s) &= VQ^{-1}(s)\left(s - \frac{U}{2}s^2\right),
\end{align*}
so the integral is
\[
 \int_0^t e^{w-w_0} = Q^{-1}(t)\left[t - \frac{U}{2}t^2 + i \frac{V}{2}t^2 \right] = Q^{-1}(t)\left[1 - \frac{\bar{W}}{2}t^2\right].
\]

Finally,
\begin{align*}
 z(t) &= \left[\frac{W^2}{4}t^2 - Wt + 1\right]\left[(Z + z_0W)Q^{-1}(t)\left(1+ \frac{\bar{W}}{2}t^2\right) + z_0\right] \\
 &= [Z + z_0W]\left[t-\frac{W}{2}t^2\right] + z_0\left[\frac{W^2}{4}t^2 + 1\right] \\
 &= Z\left[t-\frac{W}{2}t^2\right] + z_0\left[-\frac{W^2}{4}t^2 + 1\right] = \left[1-\frac{W}{2}t\right]\left[z_0 + \left(Z + \frac{z_0W}{2}\right)t\right].
\end{align*}

\section{Geometry of geodesic balls}

Since the metric is flat, one knows by the Frobenius theorem that around each point $z_1 \in (\mathbb{C}^2,g)$ there is a neighbourhood $U_{z_1}$ and a map $\varphi_{z_1} :U_{z_1} \rightarrow (\mathbb{C}^2,\mathcal{E})$ ($\mathcal{E}$ is the usual Euclidean metric) that is an isometry onto its image. Since $\text{det}(g) =1$, those two facts imply that for any $z_1$, the volume of geodesic ball $B_g(z_1,r)$ is the same as the volume of Euclidean ball of radius $r$, for $r$ sufficiently small.

In the case of the metric $g$ more is true. Let $\exp_g(z_1,\cdot)$ denote the exponential map from the metric ball $\widetilde{B}(z_1,r) = \{Z \in T_{z_1}\mathbb{C}^2\; |\; g_{z_1}(Z,Z) < r^2\}$ to the geodesic ball $B_g(z_1,r)$ for the fixed, small radius $r$. Now if ${}^t g^{-1/2}(z_1,\cdot)$ is the transpose of the positive square root of matrix $g^{-1}$ at a point $z_1$ then $\widetilde{B}(z_1,r) = {}^t g^{-1/2}(z_1,B_{\mathcal{E}}(0,r))$ and since $\det(g) = 1$, the same is true for ${}^t g^{-1/2}$. So this reduces the volume of the ball $B_g(z_1,r)$ to the following formula 
\[
 \int_{B_g}d\,vol_g = \int_{B_{\mathcal{E}}} \text{jac}(\exp_g(z_1,{}^t g^{-1/2}(x))) d\,vol_{\mathcal{E}},
\]
and the Jacobian can be computed, since the formulae for the geodesics are given explicitly. Namely, if $(e_1, e_2)$ is the canonical basis of $\mathbb{C}^2$, then in new variables $X + iY = Z = {}^t g^{-1/2}(z,e_1); U + iV = W = {}^t g^{-1/2}(z,e_2)$ the equation for metric ball at $z$ becomes $|Z|^2 + |W|^2 \leq r^2$. Computing the Jacobian of the exponential map in this variables reduces to computing the derivatives of geodesics with respect to variables $(X,Y,U,V)$ at time $t=1$ thus revealing that

\begin{equation*}
 \text{det}
 \begin{pmatrix}
  \dfrac{2-U}{2} & \dfrac{V}{2} & . & . \\[0.7em]
  \dfrac{-V}{2} & \dfrac{2-U}{2} & . &. \\[0.7em]
  0 & 0 & \dfrac{2-U}{2Q(1)} & \dfrac{-V}{2Q(1)} \\[0.7em]
  0 & 0 & \dfrac{V}{2Q(1)} & \dfrac{2-U}{2Q(1)}
 \end{pmatrix}
 = 1.
\end{equation*}

Thus for $U<2$ the geodesic ball has the same volume as the Euclidean ball. At $U=2$ the entries in the matrix above are undefined for $V=0$, since then $Q(1) = 0$. 

That means the set of directions $N$ for which the geodesics reach infinity in finite time is negligible, namely  $N= \{(X,Y,U,0) \in T\mathbb{C}^2 \;|\;\; U > 0\}$. Any geodesic with intial velocity $V \neq 0$ or $U \leq 0$ can be extended indefinitely from any initial point and the set $B_g(\cdot,r)\setminus N$ will have the same volume as eucildean $r$-ball for any $r$. Here the set $B_g(\cdot,r)\setminus N$ is understood as set of constant ($\leq r$) speed geodesics with the same initial point and with initial velocities in complement of $N$.

\section{Generalization}

The orginal approach of Warren was based on the problem of finding non-polynomial solutions to the $k$-Hessian equations. There, given $(x,y) \in \mathbb{R}^{n-1}\times\mathbb{R}$ one was looking for a function $h(y)$, such that the function $f(x,y) = |x|^2e^y + h(y)$ satisfies
\[
 \sigma_k(D^2f) = 1, \qquad 2k-1\leq n,
\]
with $\sigma_k$ being the $k$-th elementary symmetric polynomial, that is the sum of determinants of all principal minors of size $k \times k$. For $n=3$ and $k=2$ with $h(y) = e^{-y}/4$ this gave a solution to Donaldson's equation and the metric $g$ under consideration. The aim of this section is to generalize the construction of $g$. This is done by noticing that leaving the quadratic part in the potential $f$ and changing $e^y$ and $h(y)$ can still produce flat metrics with unit determinant under suitable assumptions.

\begin{Prop}
 Let $e^{-h(w)}dw\,d\bar{w}$ be the a flat metric on $\mathbb{C}$, then the metric $K_h$ defined as
 \[
 K_h =
  \begin{pmatrix}
  e^{h(w)} & \bar{z}\dfrac{\partial e^{h(w)}}{\partial\bar{w}}\\[0.9em]
  z\dfrac{\partial e^{h(w)}}{\partial w} & |z|^2\dfrac{\partial^2 e^{h(w)}}{\partial w\partial\bar{w}} + e^{-h(w)}
 \end{pmatrix}
 \]
 is a flat \Kahler metric on $\mathbb{C}^2$, such that $\textup{det}K_h = 1$
\end{Prop}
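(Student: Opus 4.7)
The plan is to exhibit a local biholomorphism that identifies $K_h$ with the flat Euclidean metric on $\mathbb{C}^2$; all three claims (\Kahler, flat, unit determinant) then follow from this single observation.

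First, I would translate the hypothesis into a concrete PDE for $h$. A direct computation of the Gaussian curvature of the conformally flat metric $e^{-h}\,dw\,d\bar w$ on $\mathbb{C}$ gives (up to a harmless sign convention) $e^h h_{w\bar w}$, so the assumed flatness is exactly the condition $h_{w\bar w}=0$. Since $h$ is real-valued, it then admits (locally) a pluriharmonic decomposition $h(w,\bar w) = \phi(w) + \overline{\phi(w)}$ for some holomorphic $\phi$. A local primitive $F$ of $e^{-\phi}$, i.e.\ $F'(w)=e^{-\phi(w)}$, then fulfils $|F'|^2 = e^{-h}$.

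Next I would introduce the candidate potential
\[
 \varphi(z,w,\bar z,\bar w) = |z|^2\,e^{h(w,\bar w)} + |F(w)|^2
\]
and check that its complex Hessian reproduces $K_h$ entry by entry. The only non-trivial match is the $(w,\bar w)$-slot, where the identity $\partial_w\partial_{\bar w}(e^h)=e^h h_w h_{\bar w}$ (valid because $h_{w\bar w}=0$) together with $\partial_w\partial_{\bar w}|F|^2 = |F'|^2 = e^{-h}$ produces exactly $|z|^2\partial_w\partial_{\bar w}(e^h) + e^{-h}$. This simultaneously proves that $K_h$ is \Kahler. Expanding the $2\times 2$ determinant directly gives
\[
 \det K_h \;=\; 1 + |z|^2 e^{2h}\,h_{w\bar w},
\]
since the $(e^h)_w(e^h)_{\bar w}$ contribution from the off-diagonal entries cancels the $e^h h_w h_{\bar w}$ piece of $e^h\,\partial_w\partial_{\bar w}(e^h)$; the condition $h_{w\bar w}=0$ then yields $\det K_h = 1$.

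Finally, I would set $\tilde z = z\,e^{\phi(w)}$ and $\tilde w = F(w)$. This is a local biholomorphism whose Jacobian matrix is upper triangular with $e^{\phi}$ and $e^{-\phi}$ on the diagonal, and a direct substitution gives $\varphi = |\tilde z|^2 + |\tilde w|^2$. Thus $K_h$ is the pullback of the flat Euclidean \Kahler metric under a biholomorphism and is therefore flat. The main conceptual step is recognising that the same pluriharmonic splitting of $h$ which provides the primitive $F$ also provides an isometry to the Euclidean model; once this is seen, the verifications are routine, and one could equivalently establish flatness by mimicking Section~2, computing the Christoffel symbols from the explicit inverse and observing that each of them depends only on the holomorphic coordinates.
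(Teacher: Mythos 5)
Your proof is correct, but it takes a genuinely different route from the paper. The paper verifies the determinant by direct expansion (recognising $e^h\partial_w\partial_{\bar w}e^h-|\partial_we^h|^2$ as $e^{2h}$ times the curvature of $e^h\,dw\,d\bar w$, which vanishes since $-h$ is also harmonic) and then establishes flatness by computing all the Christoffel symbols of $K_h$ explicitly, using the harmonicity of $h$ to simplify $\Gamma^z_{ww}$ and $\Gamma^w_{ww}$, and observing that every symbol is holomorphic --- which for a K\"ahler metric forces $R^{\delta}_{\alpha\bar\beta\gamma}=-\partial_{\bar\beta}\Gamma^{\delta}_{\alpha\gamma}$ to vanish. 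You instead exploit the pluriharmonic splitting $h=\phi+\bar\phi$ to produce the potential $|z|^2e^h+|F|^2$ with $F'=e^{-\phi}$, and then the holomorphic change of variables $(\tilde z,\tilde w)=(ze^{\phi(w)},F(w))$ turning that potential into $|\tilde z|^2+|\tilde w|^2$; since $\partial\bar\partial$ of a potential pulls back under holomorphic maps, $K_h$ is the pullback of the Euclidean metric and is flat, and the unimodularity of the Jacobian (upper triangular with $e^{\phi}$, $e^{-\phi}$ on the diagonal) even re-explains $\det K_h=1$. Your argument is more conceptual: it delivers K\"ahlerness, flatness and the determinant in one stroke and identifies the flat coordinates explicitly, whereas the paper's computation is self-contained and does not require integrating $e^{-\phi}$. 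Two minor remarks: the identity $\partial_w\partial_{\bar w}(e^h)=e^hh_wh_{\bar w}$ is not needed to match the $(w,\bar w)$-entry of the Hessian (only for the determinant), and to call $K_h$ a metric you should note positivity, which is immediate from $e^h>0$ and $\det K_h=1$; on $\mathbb{C}$ the harmonic conjugate and the primitive $F$ in fact exist globally, though locality suffices for flatness.
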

\begin{proof}
 The curvature of $e^{-h(w)}$ is
 \[
  R(e^{-h}) = e^h\left(\drugapochodna{e^{-h}}{w}{w} - e^h\left|\pierwszapochodna{e^{-h}}{w}\right|\right) = \Delta h.
 \]
 So flatness is equivalent to the harmonicity of $h$. Since the metric is given explicitly the proof of K\"{a}hlerness follows directly from computation. Similarly for the determinant:
 \[
  \text{det}(K_h) = 1 + |z|^2\left(e^h\drugapochodna{e^h}{w}{w} - \left|\pierwszapochodna{e^h}{w}\right|^2\right) = 1 + |z|^2(e^{2h}R(e^h)) = 1,
 \]
 where $R(e^h)$ is also zero, since both $h$ and $-h$ are harmonic.
 
 Let $\Gamma^\alpha_{\beta\gamma}$ denote the Christoffel symbols of $K_h$. Except for $\Gamma^z_{ww}$ and $\Gamma^w_{ww}$ the computation is straightforward, showing that every symbol is holomorphic. For the remaining two one needs to use the harmonicity of $h$, getting
 \begin{align*}
  \Gamma^z_{ww} &= ze^{h}\left(\left(\pierwszapochodna{h}{w}\right)^2 + \frac{\partial^2 h}{\partial w^2}\right)\left(|z|^2e^{h}\left|\pierwszapochodna{h}{w}\right|^2 + e^{-h}\right) - \\  
  &\left(|z|^2e^{h}\pierwszapochodna{h}{\bar{w}}\left(\frac{\partial^2 h}{\partial w^2} + \left(\pierwszapochodna{h}{w}\right)^2\right) - e^{-h}\pierwszapochodna{h}{w}\right)\left(ze^{h}\pierwszapochodna{h}{w}\right) = \\
  &z\left(2\left(\pierwszapochodna{h}{w}\right)^2 + \frac{\partial^2 h}{\partial w^2}\right), \\
  \Gamma^w_{ww} &= ze^{h}\left(\left(\pierwszapochodna{h}{w}\right)^2 + \frac{\partial^2 h}{\partial w^2}\right)\left(-\bar{z}e^{h}\pierwszapochodna{h}{\bar{w}}\right) + \\
  &\left(|z|^2e^{h}\pierwszapochodna{h}{\bar{w}}\left(\frac{\partial^2 h}{\partial w^2} + \left(\pierwszapochodna{h}{w}\right)^2\right) - e^{-h}\pierwszapochodna{h}{w}\right)e^{h} = -\pierwszapochodna{h}{w}.
 \end{align*}
 Both symbols are holomorphic and thus $K_h$ is flat.
\end{proof}

\vspace{1em}
\textbf{Acknowledgement.} The author would like to thank Sławomir Dinew for his advice and patience.
The research was supported by NCN grant 2013/08/A/ST1/00312.

\end{document}